\newtheorem{theorem}{Theorem}[section]
\newtheorem{lemma}[theorem]{Lemma}
\newtheorem{corollary}[theorem]{Corollary}
\theoremstyle{definition}
\theoremstyle{remark}
\numberwithin{equation}{section}
\newcommand{\R}{\mathbb{R}}
\newcommand{\N}{\mathbb{N}}
\newcommand{\x}{\mathfrak{x}}\newcommand{\y}{\mathfrak{y}}
\newcommand{\al}{\alpha}
\newcommand{\dX}{{\mathfrak{X}}}\newcommand{\dY}{{\mathfrak{Y}}}
\newcommand{\gor}{gr\dot{}\,}
\newcommand{\lie}{\mathfrak{g}}
\newcommand{\Fy}{\mathfrak{F}(\lie)}
\newcommand{\Lw}{L}
\newcommand{\Lh}{\mathcal{L}}
\newcommand{\Mo}{\mathcal{M}}
\begin{document}

\title{Stable flatness of nonarchimedean hyperenveloping algebras}
\author{Tobias Schmidt}
\address{Universit\"at M\"unster,Math. Institut, Einsteinstr.62, 48149 M\"unster, Germany}
\curraddr{} \email{toschmid@uni-muenster.de}
\thanks{}

\begin{abstract}
Let $L$ be a completely valued nonarchimedean field and $\lie$ a
finite dimensional Lie algebra over $L$. We show that its
hyperenveloping algebra $\Fy$ agrees with its Arens-Michael
envelope and, furthermore, is a stably flat completion of its
universal enveloping algebra. As an application, we prove that the
Taylor relative cohomology for the locally convex algebra $\Fy$ is
naturally isomorphic to the Lie algebra cohomology of $\lie$.
~\\~\\
Keywords: Lie algebras, stable flatness, representation
theory.\end{abstract}

 \maketitle
\section{Introduction}

Let $L$ be a $p$-adic local field and $G$ a locally $L$-analytic
group. A distinguished topological algebra appearing in the
locally analytic representation theory of $G$ is the
hyperenveloping algebra $\Fy$ associated to the Lie algebra $\lie$
of $G$ (\cite{T}). It equals a certain canonical locally convex
completion of the ordinary universal enveloping algebra $U(\lie)$,
similar to the well-known Arens-Michael envelope
(\cite{Helemski}). In the theory of complex Lie algebras it is
important to know when these completions are stably flat (or, in
other words, Taylor absolute localizations, cf. \cite{P}). Vaguely
speaking, a continuous ring extension between topological algebras
$\theta: A\rightarrow B$ is stably flat if the restriction functor
$\theta_*$ identifies the category of topological $B$-modules with
a full subcategory of topological $A$-modules in a way that leaves
certain homological relations invariant. We recall that such
extensions are of central importance in complex non-commutative
operator theory, partly because they leave the joint spectrum
invariant ( \cite{P},~\cite{T2}).

Turning back to our nonarchimedean setting it is therefore natural
to ask for nonarchimedean analogues of these complex results. In
this brief note we give a positive answer in full generality:
given a finite dimensional Lie algebra $\lie$ over an arbitrary
completely valued nonarchimedean field we define its
hyperenveloping algebra $\Fy$ and deduce that it always coincides
with the Arens-Michael envelope. Our main result is then that the
natural map \[U(\lie)\rightarrow\Fy\] is stably flat. We remark
that the first (second) result is in contrast to (in accordance
with) the complex situation (\cite{P}).

To give an application of our results recall that cohomology
theory for locally analytic $G$-representations (\cite{K2})
follows Taylor's general approach of a homology theory for
topological algebras (\cite{T1},\cite{T2}) where the locally
convex algebra of locally analytic distributions on $G$ takes up
the role of the base algebra. In view of the close relation
between this latter algebra and $\Fy$ (\cite{T}) it is to be
expected that locally analytic cohomology is closely related to
the cohomology relative to the base algebra $\Fy$. As a
consequence of our flatness result we find this latter cohomology
to be naturally isomorphic to the usual Lie algebra cohomology of
$\lie$.

\section{The result}
Throughout this note we freely use basic notions of nonarchimedean
functional analysis as presented in \cite{NFA}. Let $L$ be a
completely valued nonarchimedean field. We begin by recalling the
necessary relative homological algebra following
\cite{K2},\cite{P},\cite{T1}. We emphasize that as in
\cite{K2},\cite{T2} (but in contrast to \cite{P}) our preferred
choice of topological tensor product is the completed inductive
topological tensor product $\hat{\otimes}_\Lw$. By a {\it
topological algebra} $A$ we mean a complete Hausdorff locally
convex $\Lw$-space together with a separately continuous
multiplication. For a topological algebra $A$ we denote by
$\mathcal{M}_A$ the category of complete Hausdorff locally convex
$\Lw$-spaces with a separately continuous left $A$-module
structure (to ease notation we denote the right version by the
same symbol). Morphisms are continuous module maps and the
Hom-functor is denoted by $\Lh_A(.,.)$. A morphism is called {\it
strong} if it is strict with closed image and if both its kernel
and its image admit complements by closed $\Lw$-subspaces. The
category $\Mo_A$ is endowed with a structure of exact category by
declaring a sequence to be {\it $s$-exact} if it is exact as a
sequence of abstract vector spaces and if all occuring maps are
strong. Finally, a module $P\in\mathcal{M}_A$ is called {\it
$s$-projective} if the functor $\Lh_A(P,.)$ transforms short
$s$-exact sequences into exact sequences of abstract $\Lw$-vector
spaces. A {\it projective resolution} of $M\in\mathcal{M}_A$ is an
augmented complex $P_\bullet\rightarrow M$ which is $s$-exact and
where each $P_n$ is $s$-projective. A standard argument shows that
$\mathcal{M}_A$ has enough projectives and that any object admits
a projective resolution. As usual for a left resp. right
$A$-module $N$ resp. $M$ we denote by $M\hat{\otimes}_A N$ the
quotient of $M\hat{\otimes}_L N$ by the closure of the subspace
generated by elements of the form $ma\otimes n-m\otimes an, a\in
A, m\in M, n\in N$. Given a projective resolution
$P_\bullet\rightarrow M$ we define as usual
\[
\mathcal{T}or_*^{A}(M,N):=h_*(P_\bullet\hat{\otimes}_A N)~,~~~~~
\mathcal{E}xt^*_{A}(M,N):=h^*(\Lh_A(P_\bullet,N))
\]
 for $M,N\in\mathcal{M}_A$. These $L$-vector spaces do not depend
on the choice of $P_\bullet$ and have the usual functorial
properties.

Given a topological algebra $A$ so is the opposite algebra
$A^{op}$ and we may form the enveloping algebra
$A^{e}:=A\hat{\otimes}_\Lw A^{op}$ as a topological algebra. Given
a continuous ring homomorphism between topological algebras
$\theta: A\rightarrow B$ we may define a functor
$B^{e}\hat{\otimes}_{A^{e}}(.)$ from the category of $A$-bimodules
$\mathcal{M}_{A^{e}}$ to the category of $B$-bimodules
$\mathcal{M}_{B^{e}}$. The map $\theta$ is called {\it stably
flat} (or an {\it absolute localization}, cf. \cite{P},\cite{T2})
if the above functor transforms every projective resolution of
$A^{e}$ into a projective resolution of $B^{e}$.

Let $\lie$ be a finite dimensional Lie algebra over $L$ and let
$U(\lie)$ be its enveloping algebra. Denote by $M_\lie$ the
category of all (abstract) left $\lie$-modules. Fix a real number
$r>1$. Let $\x_1,...,\x_d$ be an ordered $L$-basis of $\lie$ with
$d={\rm dim}_L\lie$. Using the associated {\it
Poincar\'e-Birkhoff-Witt-basis} for $U(\lie)$ we define a vector
space norm on $U(\lie)$ via \begin{equation}\label{norms}
||\sum_\al d_\al\dX^\al||_{\dX,r}=\sup_\al
|d_\al|r^{|\al|}\end{equation} where
$\dX^\al:=\x_1^{\al_1}\cdot\cdot\cdot\x_d^{\al_d},~\al\in\N_0^d$.
We call the Hausdorff completion of $U(\lie)$ with respect to the
family of norms $||.||_{\dX,r},~r>1$ the {\it hyperenveloping
algebra} of $\lie$. Being a Hausdorff completion it comes equipped
with a natural map \[\theta: U(\lie)\rightarrow\Fy.\] Endowing the
source with the finest locally convex topology we will see below
that $\theta$ is a continuous homomorphism between well-defined
topological algebras in the above sense. In particular, we have
the categories $\mathcal{M}_{U(\lie)}$ and $\mathcal{M}_{\Fy}$ at
our disposal. Finally, the Hausdorff completion of $U(\lie)$ with
respect to {\it all} submultiplicative semi-norms on $U(\lie)$ is
called the {\it Arens-Michael envelope} $\hat{U}(\lie)$ of
$U(\lie)$ (cf. \cite{Helemski}, chap. V, \cite{P}, 6.1).
\begin{theorem}
The homomorphism $\theta$ is stably flat. It induces a natural
isomorphism of topological algebras
\[\hat{U}(\lie)\stackrel{\cong}{\longrightarrow}\Fy.\]
\end{theorem}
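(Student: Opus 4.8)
The plan is to establish the two assertions in sequence, since the identification $\hat{U}(\lie)\cong\Fy$ will in fact make the flatness statement accessible via a Banach-algebra description of the pieces. First I would make the norm bookkeeping precise: for each $r>1$ let $U_r(\lie)$ denote the completion of $U(\lie)$ with respect to $\|.\|_{\dX,r}$. The first thing to check is that $\|.\|_{\dX,r}$ is submultiplicative (up to scaling $r$ this follows from the commutator estimates in the PBW basis, $[\x_i,\x_j]\in\lie$ having bounded coordinates), so each $U_r(\lie)$ is a Banach algebra and $\Fy=\varprojlim_r U_r(\lie)$ is a Fr\'echet algebra with jointly continuous multiplication; this also justifies the claim that $\theta$ is a continuous homomorphism of topological algebras when the source carries the finest locally convex topology. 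For the Arens-Michael identification one shows that (i) every $\|.\|_{\dX,r}$ is submultiplicative, hence $\Fy$ receives a map from $\hat U(\lie)$, and (ii) conversely every submultiplicative seminorm $q$ on $U(\lie)$ is dominated by some $\|.\|_{\dX,r}$: indeed $q$ is determined by $q(\x_1),\dots,q(\x_d)$ via submultiplicativity and the PBW basis, so choosing $r$ larger than all these values gives $q\le C\|.\|_{\dX,r}$. These two inclusions of seminorm families are cofinal in each other, so the two Hausdorff completions coincide as topological algebras.

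For stable flatness, the key is to recognize the enveloping-algebra side in the same Banach-algebra terms. One has $\Fy^e=\Fy\hat\otimes_L\Fy^{op}\cong\mathfrak{F}(\lie\times\lie^{op})$ (the norms multiply correctly under the inductive tensor product, and $\lie\times\lie^{op}$ is again a finite-dimensional Lie algebra), and similarly $U(\lie)^e=U(\lie\times\lie^{op})$. So it suffices to show: for any finite-dimensional $\lie$, the map $U(\lie)\to\Fy$ sends a projective resolution of $U(\lie)$ (as a $U(\lie)$-bimodule) to a projective resolution of $\Fy$. The standard device is the Chevalley--Eilenberg--Koszul resolution
\[
0\to U(\lie)\otimes_L\textstyle\bigwedge^d\lie\to\cdots\to U(\lie)\otimes_L\lie\to U(\lie)\to U(\lie)\to 0,
\]
a resolution of $U(\lie)$ by free $U(\lie)^e$-modules of finite rank. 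Applying $\Fy^e\hat\otimes_{U(\lie)^e}(-)$ turns this into the analogous complex
\[
0\to\Fy\hat\otimes_L\textstyle\bigwedge^d\lie\to\cdots\to\Fy\hat\otimes_L\lie\to\Fy\to\Fy\to 0
\]
of free $\Fy^e$-modules (here one uses that $\Fy^e\hat\otimes_{U(\lie)^e}(U(\lie)\otimes_L V)\cong\Fy\hat\otimes_L V$ for finite-dimensional $V$, which is a formal computation with the tensor product). So the whole content is to prove that \emph{this complex is $s$-exact}, i.e. exact with strict maps admitting closed complements. That is the main obstacle.

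To get exactness I would work one Banach level at a time: the completed Koszul complex is the projective limit over $r>1$ of the Banach Koszul complexes $U_r(\lie)\otimes_L\bigwedge^\bullet\lie$, and these form a projective system with dense transition maps. The classical Chevalley--Eilenberg differential together with the chosen contracting homotopy on the algebraic complex extends continuously to each Banach level --- here one checks that the PBW-type estimates make the homotopy operator bounded for $\|.\|_{\dX,r}$, possibly after shrinking $r$, because the homotopy only involves multiplication by basis elements $\x_i$ and reindexing of monomials, both of which are bounded operations on $U_r(\lie)$ once $r$ is large. A bounded contracting homotopy on each $U_r(\lie)$-level immediately gives exactness (hence strictness with complemented kernels/images, i.e. $s$-exactness) on that level; passing to the limit, and using that a projective limit of exact complexes of Fr\'echet spaces with a compatible system of contracting homotopies is again exact, yields $s$-exactness of the completed Koszul complex. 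Finally, since each term is $\Fy^e$-free, this is the required projective resolution of $\Fy$, so $\theta$ is stably flat. I expect the delicate point to be exactly the uniform boundedness of the contracting homotopy across the seminorm family --- verifying that the constants do not blow up, and handling the mild loss of radius $r\mapsto r'<r$ that such estimates typically incur, which is harmless because the $r$'s are cofinal but must be tracked carefully.
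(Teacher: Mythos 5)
Your outline for the Arens--Michael identification is essentially the paper's argument (cofinality of the family $\|.\|_{\dX,r}$ among submultiplicative seminorms), and your instinct to reduce stable flatness to exactness of a completed Koszul complex controlled by norm-bounded contracting homotopies is also the right one. But as written, the stable-flatness part has a genuine gap in the bimodule bookkeeping.

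You correctly state that stable flatness asks that $\Fy^e\hat\otimes_{U(\lie)^e}(-)$ send a $U(\lie)^e$-projective resolution of $U(\lie)$ to an $\Fy^e$-projective resolution of $\Fy$, and you then propose to use $U_\bullet=U(\lie)\otimes_L\bigwedge^\bullet\lie\to U(\lie)$ as that resolution, calling its terms ``free $U(\lie)^e$-modules.'' They are not: $U(\lie)\otimes_L\bigwedge^q\lie$ is free as a left $U(\lie)$-module, and the augmented complex $U_\bullet\to L$ resolves the \emph{trivial} module $L$, not $U(\lie)$ as a bimodule. The genuine $U(\lie)^e$-free Koszul resolution of $U(\lie)$ has terms $U(\lie)\otimes_L\bigwedge^q\lie\otimes_L U(\lie)$, and your formula $\Fy^e\hat\otimes_{U(\lie)^e}(U(\lie)\otimes_L V)\cong\Fy\hat\otimes_L V$ does not literally parse because the left-hand side's argument is not a $U(\lie)^e$-module in the way the formula presumes. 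The passage from the bimodule problem to the one-sided augmented complex $U_\bullet\to L$ is exactly where the Hopf structure must be invoked; it is not a formal tensor identity. The paper makes this step explicit: it verifies that $\Fy$ is a Hopf $\hat\otimes$-algebra with invertible antipode (Lemma \ref{Hopf}) and then cites Pirkovski's Prop.\ 3.7, which says that for a morphism of such Hopf algebras, stable flatness is equivalent to the ``weak localization'' conditions (\ref{augmentation}) and (\ref{base}) — precisely the one-sided statements about $L$ that your Koszul complex computes. Without this reduction your argument, as written, proves the wrong thing.

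A secondary, smaller imprecision: the standard contracting homotopy on $U_\bullet$ is not simply ``multiplication by $\x_i$ and reindexing.'' It is built (via the PBW/symmetrization isomorphism to $S_\bullet$) as a pointwise-stationary limit of operators $\sigma^{(n)}$, and the norm estimate must be threaded through that limiting construction; the paper does this in Lemmas \ref{firststep} and \ref{cont}. Also, with a basis of integral structure constants the paper gets the homotopy norm-\emph{decreasing} for the \emph{same} $r$, so the loss of radius you flagged as a delicate point does not in fact occur. None of this is fatal to your plan, but the missing Hopf-algebra reduction is.
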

As with any stably flat homomorphism we obtain that the
restriction functor $\theta_*$ identifies $\mathcal{M}_{\Fy}$ with
a full subcategory of $\mathcal{M}_{U(\lie)}$ (cf. \cite{T2},
Prop. 1.2) leaving certain homological relations invariant
([loc.cit.], Prop.~1.4). Since in our setting $U(\lie)$ has the
finest locally convex topology one may go one step further and
pass to abstract Lie algebra cohomology.
\begin{corollary}
Given $M,N\in\mathcal{M}_{\Fy}$ the restriction functor $\theta_*$
induces natural vector space isomorphisms
\[\mathcal{T}{\rm or}^{\Fy}_*(M,N)\cong{\rm
Tor}^{U(\lie)}_*(M,N)~,~~~~~~\mathcal{E}{\rm
xt}_{\Fy}^*(M,N)\cong{\rm Ext}_{U(\lie)}^*(M,N).\]
\end{corollary}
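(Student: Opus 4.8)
The plan is to bootstrap the Corollary out of the stable flatness assertion of the Theorem together with the fact that on $U(\lie)$ we are using the finest locally convex topology. The key observation is that when $U(\lie)$ carries the finest locally convex topology, every abstract left $U(\lie)$-module becomes a topological module in $\mathcal{M}_{U(\lie)}$ (every vector space with its finest locally convex topology is complete Hausdorff, every subspace is closed and complemented, every linear map is continuous and strict), so $\mathcal{M}_{U(\lie)}$ is literally the abstract module category $M_\lie$ with \emph{all} exact sequences being $s$-exact. Consequently $s$-projective objects are precisely the abstract projective modules, $s$-projective resolutions are ordinary projective resolutions, and $\mathcal{E}xt^*_{U(\lie)}$, $\mathcal{T}or^{U(\lie)}_*$ coincide with the classical $\mathrm{Ext}_{U(\lie)}^*$, $\mathrm{Tor}^{U(\lie)}_*$ (here one also uses that $\hat\otimes_L$ over the discretely-topologized $U(\lie)$-modules is just the algebraic tensor product, and similarly $\hat\otimes_{U(\lie)}$ reduces to $\otimes_{U(\lie)}$). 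This identification is the content of the first sentence of the paragraph preceding the Corollary and is where I would spend a few lines being careful.

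Next I would invoke the general homological consequences of stable flatness recorded in \cite{T2}, Prop.~1.4 (and \cite{P}): for a stably flat $\theta:A\to B$ and $M,N\in\mathcal{M}_B$ one has natural isomorphisms $\mathcal{T}or^B_*(M,N)\cong\mathcal{T}or^A_*(\theta_*M,\theta_*N)$ and $\mathcal{E}xt^*_B(M,N)\cong\mathcal{E}xt^*_A(\theta_*M,\theta_*N)$. Apply this with $A=U(\lie)$, $B=\Fy$, and $\theta$ the map from the Theorem, which is stably flat by that theorem. Combining with the identification of the previous step — that on the $U(\lie)$-side the relative $\mathcal{E}xt$ and $\mathcal{T}or$ are the classical ones — gives exactly the claimed isomorphisms
\[
\mathcal{T}or^{\Fy}_*(M,N)\cong\mathrm{Tor}^{U(\lie)}_*(M,N),\qquad
\mathcal{E}xt^*_{\Fy}(M,N)\cong\mathrm{Ext}^*_{U(\lie)}(M,N),
\]
where on the right $M,N$ are regarded as abstract $U(\lie)$-modules via $\theta$ (i.e. via $\theta_*$). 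Naturality in $M$ and $N$ is inherited from the naturality of the stable-flatness isomorphisms and of the comparison isomorphism with abstract (co)homology.

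The only genuine subtlety — and the step I expect to need the most care — is the verification that the relative derived functors $\mathcal{E}xt^*_{U(\lie)}$ and $\mathcal{T}or^{U(\lie)}_*$ really do reduce to their classical counterparts when $U(\lie)$ has the finest locally convex topology. Concretely one must check: (i) every abstract $U(\lie)$-module lies in $\mathcal{M}_{U(\lie)}$ and every short exact sequence of such is $s$-exact (so strictness, closedness of images, and existence of closed complements are automatic for the finest locally convex topology, which is a standard fact from \cite{NFA}); (ii) hence $s$-projective $=$ projective and an $s$-projective resolution is just a projective resolution, so one may compute with a classical free resolution $P_\bullet\to M$; and (iii) on such discretely-topologized modules $P_\bullet\hat\otimes_{U(\lie)}N=P_\bullet\otimes_{U(\lie)}N$ and $\Lh_{U(\lie)}(P_\bullet,N)=\mathrm{Hom}_{U(\lie)}(P_\bullet,N)$ algebraically, since the completed inductive tensor product of spaces with the finest locally convex topology is the algebraic one and all module maps are automatically continuous. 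Once (i)–(iii) are in place, the Corollary follows formally from the Theorem and \cite{T2}, Prop.~1.4.
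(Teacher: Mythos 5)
Your proposal takes a genuinely different route from the paper's own proof, and in fact resembles the alternative proof sketched in the paper's closing remark ("Another proof of the corollary (at least for the Tor-groups) may then be deduced using analogues of \cite{T2}, Prop.~1.4 and \cite{P}, Prop.~3.4"). The paper's actual proof is more hands-on: it picks an abstract free $U(\lie)$-resolution $P_\bullet\to M$ and gives it the finest locally convex topology, checks that the analogue of \cite{P}, Prop.~3.3 holds so that $\Fy\hat\otimes_{U(\lie)}M\to M$ is an isomorphism in $\mathcal{M}_{\Fy}$, deduces from stable flatness that $\Fy\hat\otimes_{U(\lie)}P_\bullet\to M$ is a projective resolution in $\mathcal{M}_{\Fy}$, and then identifies the resulting Hom- and tensor-complexes degree by degree with the classical ones, using that $P_\bullet$ is discretely topologized. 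That argument treats $\mathcal{T}or$ and $\mathcal{E}xt$ symmetrically and avoids any appeal to a general comparison theorem.

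There are two points where your write-up needs repair. First, the claim that ``$\mathcal{M}_{U(\lie)}$ is literally the abstract module category $M_\lie$'' is not correct: giving every abstract module the finest topology gives a faithful embedding $M_\lie\hookrightarrow\mathcal{M}_{U(\lie)}$, but $\mathcal{M}_{U(\lie)}$ contains modules with non-discrete topologies (e.g. the restrictions $\theta_*M$ of $\Fy$-modules, which are Fr\'echet), so it is not an equivalence. The correct statement is the weaker one that the relative $\mathcal{E}xt$ and $\mathcal{T}or$ over the discretely-topologized $U(\lie)$ can be computed with discretely-topologized free resolutions and then agree with the classical ones; even this requires checking that such a resolution is $s$-exact as an augmented complex over the given (non-discrete) $M$, which is exactly the verification the paper carries out via its Prop.~3.3 step. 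Second, your appeal to \cite{T2}, Prop.~1.4 as a black box giving both the Tor and the Ext comparison is optimistic: the paper itself flags this route as working ``at least for the Tor-groups,'' so the Ext half is not automatic from that citation and is precisely why the paper's main proof constructs an explicit resolution in $\mathcal{M}_{\Fy}$. If you want to keep your structure, you should replace the category-equivalence claim with the weaker resolution-level comparison, make the $\Fy\hat\otimes_{U(\lie)}M\cong M$ step explicit, and either justify the Ext case of the invariance theorem or switch to the paper's direct computation for that half.
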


\section{The proof}
\subsection{Norms on the hyperenveloping algebra}
We begin with two simple lemmas on the norm $||.||_{\dX,r},~r>1$.
Let $c_{ijk}\in L,~1\leq i,j,k\leq d$ denote the structure
constants of $\lie$ attached to the basis $\x_1,...,\x_d$.
\begin{lemma}\label{mult}
Suppose $|c_{ijk}|\leq 1$ for all $1\leq i,j,k\leq d$. Then
$||.||_{\dX,r}$ is multiplicative.
\end{lemma}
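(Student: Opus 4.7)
The plan is to compare the product in $U(\lie)$ with the product in the symmetric algebra $L[x_1,\dots,x_d]$ equipped with the weighted Gauss norm $\|\sum d_\al x^\al\|_r=\sup_\al|d_\al|r^{|\al|}$, which is multiplicative (by rescaling $x_i\mapsto r^{-1}x_i$ one reduces to the standard Gauss lemma for polynomial rings). The linear PBW map $\pi\colon U(\lie)\to L[x_1,\dots,x_d]$ sending $\dX^\al$ to $x^\al$ is an isometry for $\|.\|_{\dX,r}$ versus $\|.\|_r$, but it is of course not an algebra map, and the whole issue is to control the discrepancy between the two products.

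First I would establish a reordering estimate in $U(\lie)$: for all $\al,\beta\in\N_0^d$,
\[
\dX^\al\dX^\beta = \dX^{\al+\beta} + \sum_{|\gamma|<|\al|+|\beta|} a^\gamma_{\al,\beta}\,\dX^\gamma,
\]
with coefficients $a^\gamma_{\al,\beta}\in L$ of absolute value at most $1$. This I would prove by induction on the number of inversions needed to bring the monomial $\dX^\al\dX^\beta$ into PBW order, repeatedly applying $\x_j\x_i=\x_i\x_j-\sum_k c_{ijk}\x_k$ for $j>i$. Each application either decreases the inversion count at constant total degree, or peels off a term of total degree one smaller carrying an extra factor $c_{ijk}$; an induction on (total degree, inversion count) then shows that every $a^\gamma_{\al,\beta}$ is a $\Z$-polynomial in the structure constants, hence bounded by $1$ in absolute value under the hypothesis.

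Given this, submultiplicativity is immediate: since $r>1$, the expansion yields $\|\dX^\al\dX^\beta\|_{\dX,r}\le r^{|\al|+|\beta|}$, and applying the nonarchimedean triangle inequality to the finite sum $fg=\sum d_\al e_\beta\,\dX^\al\dX^\beta$ gives $\|fg\|_{\dX,r}\le\|f\|_{\dX,r}\|g\|_{\dX,r}$. For the reverse inequality, choose $\gamma_0$ achieving the supremum in the symmetric product, so that the coefficient $c^{\mathrm{sym}}_{\gamma_0}$ of $x^{\gamma_0}$ in $\pi(f)\pi(g)$ satisfies $|c^{\mathrm{sym}}_{\gamma_0}|\,r^{|\gamma_0|}=\|\pi(f)\pi(g)\|_r=\|f\|_{\dX,r}\|g\|_{\dX,r}$. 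The coefficient of $\dX^{\gamma_0}$ in $fg$ is then $c^{\mathrm{sym}}_{\gamma_0}$ plus reordering errors contributed by pairs $(\al,\beta)$ with $|\al|+|\beta|>|\gamma_0|$; each such error is bounded by $|d_\al|r^{|\al|}\,|e_\beta|r^{|\beta|}\,r^{|\gamma_0|-|\al|-|\beta|}$, which is \emph{strictly} smaller than $\|f\|_{\dX,r}\|g\|_{\dX,r}$ because $r>1$ and $|\gamma_0|<|\al|+|\beta|$. The strict ultrametric dominance of $c^{\mathrm{sym}}_{\gamma_0}$ over the error forces $\|fg\|_{\dX,r}\ge\|f\|_{\dX,r}\|g\|_{\dX,r}$.

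The only substantial step is the integrality bound $|a^\gamma_{\al,\beta}|\le 1$, which is a bookkeeping exercise once the inductive framework is set up; everything else is a formal consequence of $r>1$ combined with the Gauss lemma on the polynomial algebra.
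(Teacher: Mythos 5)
Your proof is correct, but it takes a genuinely different route from the paper. You transfer multiplicativity from the weighted Gauss norm on the symmetric algebra through the PBW isometry, controlling the non-commutative correction terms by an explicit straightening estimate $\dX^\al\dX^\beta=\dX^{\al+\beta}+\sum_{|\gamma|<|\al|+|\beta|}a^\gamma_{\al,\beta}\dX^\gamma$ with $|a^\gamma_{\al,\beta}|\leq 1$ (the coefficients being $\Z$-polynomials in the $c_{ijk}$, and $|\Z|\leq 1$ nonarchimedeanly), and then a strict ultrametric dominance argument using $r>1$. The paper instead argues via filtered-ring machinery: it first gets submultiplicativity from the same kind of coefficient bound $|c_{\al\beta,\gamma}|\leq r^{|\al|+|\beta|-|\gamma|}$ (citing \cite{ST5}, Prop.~4.2), then endows $U(\lie)$ with the norm filtration $F^sU(\lie)$ and observes that, since $\|\x_i\x_j-\x_j\x_i\|_{\dX,r}\leq r<r^2$, the associated graded ring is a polynomial ring over $\gor L$, hence a domain, which forces the norm to be multiplicative. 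The two arguments are close in spirit --- your strict-dominance step is exactly the concrete content of ``the associated graded ring is a domain'' --- but yours is more elementary and self-contained, at the cost of the reordering bookkeeping, while the paper's is shorter by leaning on the Schneider--Teitelbaum formalism. One cosmetic caveat: the rescaling $x_i\mapsto r^{-1}x_i$ literally requires an element of absolute value $r$ in $L$ (or a scalar extension); for arbitrary real $r$ you should either invoke the standard lexicographic-maximum proof of multiplicativity of the weighted Gauss norm directly, or note that your own dominance argument in the commutative case already gives it --- this is a presentational point, not a gap.
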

\begin{proof}
Note first that $L$ is a filtered ring (in the sense of
\cite{ST5}, sect. 1) via its absolute value and, by
multiplicativity, the associated graded ring is an integral
domain. Now put
$\dX^\al\,\dX^\beta=:\sum_\gamma\,c_{\al\beta,\gamma}\dX^\gamma$
with $c_{\al\beta,\gamma}\in L$. By hypothesis
$\dX^\al\dX^\beta=\x_1^{\al_1+\beta_1}\cdot\cdot\cdot\x_d^{\al_d+\beta_d}+\y$
where
$||\y||_{\dX,r}<||\x_1^{\al_1+\beta_1}\cdot\cdot\cdot\x_d^{\al_d+\beta_d}||_{\dX,r}$.
Hence $\sup_\gamma
|c_{\al\beta,\gamma}|r^{|\gamma|}=||\dX^\al\dX^\beta||_{\dX,r}=r^{|\al|+|\beta|}$
and therefore $|c_{\al\beta,\gamma}|\leq
r^{|\al|+|\beta|-|\gamma|}$ for all $\al,\beta,\gamma$. It follows
easily from this that $||.||_{\dX,r}$ is submultiplicative (cf.
\cite{ST5}, Prop. 4.2). Putting \[F^sU(\lie):=\{\lambda\in
U(\lie), ||\lambda||_{\dX,r}\leq p^{-s}\},\] $s\in\R$ turns
$U(\lie)$ into a filtered ring (cf. [loc.cit.], end of sect. 2).
Using the hypothesis again we obtain for $i<j$ that
\[||\x_i\x_j-\x_j\x_i||_{\dX,r}\leq r<r^2=||\x_i\x_j||_{\dX,r}.\]
The associated graded ring is therefore a polynomial ring over
$\gor L$ in the principal symbols $\sigma(\x_j)$. Thus, it is an
integral domain and therefore the norm on $U(\lie)$ must be
multiplicative.
\end{proof}
\begin{lemma}\label{independent}
The locally convex topology on $\Fy$ induced by the family
$||.||_{\dX,r},~r>1$ is independent of the choice of
$\x_1,...,\x_d$.
\end{lemma}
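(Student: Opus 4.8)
The plan is to reduce the claim to a single estimate: if $\y_1,\dots,\y_d$ is a second ordered $L$-basis of $\lie$, then for every $r>1$ there exists $r'>1$ (and conversely) such that the norm $||.||_{\dX,r}$ attached to the $\x$-basis is dominated by a constant multiple of the norm $||.||_{\dY,r'}$ attached to the $\y$-basis. Since the topology on $\Fy$ is by definition the one defined by the family $\{||.||_{\dX,r}\}_{r>1}$, such mutual domination of the two families gives the same locally convex topology, and hence the same Hausdorff completion, which is what the lemma asserts.

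To carry this out I would first pass, as in the proof of Lemma \ref{mult}, to a scaling of the basis $\x_1,\dots,\x_d$ by a single nonzero scalar $\lambda\in L$ with $|\lambda|$ small: replacing $\x_i$ by $\lambda\x_i$ multiplies all structure constants $c_{ijk}$ by $\lambda$, so after such a scaling we may assume $|c_{ijk}|\le 1$, and then Lemma \ref{mult} applies. The effect of the scaling on the norm is transparent from the PBW-description in \eqref{norms}: rescaling the basis by $\lambda$ and simultaneously replacing $r$ by $|\lambda|^{-1}r$ leaves the norm unchanged up to the obvious bookkeeping, so it suffices to compare the families $\{||.||_{\dX,r}\}_r$ and $\{||.||_{\dY,r}\}_r$ after arranging that both $\x$- and $\y$-bases have structure constants of absolute value $\le 1$ and, moreover, that each $\y_j$ lies in the $\ol$-span of the $\x_i$'s and vice versa (again achievable by a scalar).

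The core step is then the estimate itself. Write each $\y_j=\sum_i a_{ij}\x_i$ with $|a_{ij}|\le 1$. Using Lemma \ref{mult} the $\x$-norm is submultiplicative, so for a PBW-monomial $\dY^\al=\y_1^{\al_1}\cdots\y_d^{\al_d}$ in the $\y$-basis one gets $||\dY^\al||_{\dX,r}\le \prod_j||\y_j||_{\dX,r}^{\al_j}\le r^{|\al|}$, because $||\y_j||_{\dX,r}=\sup_i|a_{ij}|r\le r$. Expanding an arbitrary element $\sum_\al d_\al\dY^\al$ and using the ultrametric inequality yields $||\sum_\al d_\al\dY^\al||_{\dX,r}\le\sup_\al|d_\al|r^{|\al|}=||\sum_\al d_\al\dY^\al||_{\dY,r}$, i.e. $||.||_{\dX,r}\le||.||_{\dY,r}$ for every $r>1$; by symmetry the reverse inequality also holds, so in fact the two families coincide term by term (after the normalizations above, and up to the harmless rescaling of the parameter $r$ in general). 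This gives the equality of topologies.

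The main obstacle is purely organizational rather than deep: one must be careful that the normalizations (scaling the basis to make structure constants integral, and scaling so that one integral lattice contains the other) can be performed \emph{simultaneously} for a single comparison, and one must track how the parameter $r$ transforms under these scalings so that a cofinal subfamily is genuinely matched — the point being that the completion depends only on the filtered direct/inverse system of the $\{||.||_{\dX,r}\}_{r>1}$, not on any individual norm. Once the bookkeeping in the previous paragraph is set up correctly, the submultiplicativity supplied by Lemma \ref{mult} does all the real work.
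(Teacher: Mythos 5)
Your strategy — reduce to integral structure constants by scaling, use the (sub)multiplicativity of the norm from Lemma~\ref{mult} to bound PBW monomials, and conclude that the two families of norms define the same topology — is the same as the paper's. But there is a genuine gap in the normalization step. You claim that, after scaling by a scalar, one can arrange both that each $\y_j$ lies in the $\ol$-span of the $\x_i$'s \emph{and} vice versa. This is false in general: writing $\y_j=\sum_i a_{ij}\x_i$, the two containments require $\max_{ij}|a_{ij}|\le 1$ and $\max_{ij}|b_{ij}|\le 1$ for the inverse transition matrix $(b_{ij})$, and scaling one basis by $\mu$ multiplies these two maxima by reciprocal factors; unless $\max|a_{ij}|\cdot\max|b_{ij}|\le 1$ already holds (which need not be the case, e.g.\ for a diagonal transition matrix with one entry of large and one of small absolute value), no scalar achieves both. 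Consequently your ``by symmetry the reverse inequality also holds'' does not go through as stated, and the claim that ``the two families coincide term by term'' after normalization is an overclaim.

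The fix is precisely what the paper does, and it is not ``purely organizational'': rather than forcing $C:=\max_{ij}|a_{ij}|\le 1$, keep $C$ arbitrary and absorb it into the radius, obtaining $||\lambda||_{\dY,r}\le||\lambda||_{\dX,Cr}$ for all $r>1$. This one-sided estimate compares the $\dY$-seminorm at radius $r$ with the $\dX$-seminorm at the (possibly larger) radius $Cr$, which is all that equality of locally convex topologies requires, and this formulation \emph{is} symmetric: swapping $\dX$ and $\dY$ (with its own constant $C'$) gives the reverse domination. So the ``rescaling of $r$'' you dismiss as bookkeeping is the actual content of the argument, replacing the lattice normalization that cannot be carried out.
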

\begin{proof}
Since a homothety in $L^\times$ obviously induces an equivalent
locally convex topology we are reduced to show: given two bases
$\dX$ and $\dY$ of $\lie$ with integral structure constants there
is a real constant $C$ such that for any $\lambda\in U(\lie)$ and
any real $r>1$ we have $||\lambda||_{\dY,r}\leq
||\lambda||_{\dX,Cr}$. For all $i=1,...,d$ let
$\x_i=:\sum_{j=1,...,d} a_{ij}\y_j,~a_{ij}\in L$ and let
$C:=\max_{ij}(|a_{ij}|)$. If $\lambda=\sum_\al d_\al\dX^\al$ then
\[||\lambda||_{\dY,r}\leq\sup_\al\,|d_\al|\,(Cr)^{|\al|}=||\lambda||_{\dX,Cr}\]
using that $||.||_{\dY,r}$ is multiplicative by Lem. \ref{mult}.
\end{proof}
Recall that a {\it Hopf} $\hat{\otimes}$-{\it algebra} is a Hopf
algebra object in the braided monoidal category of topological
algebras. Without recalling more details on these definitions (cf.
\cite{P}, sect. 2) we state the
\begin{lemma}\label{Hopf} $\Fy$ is a
Hopf $\hat{\otimes}$-algebra with invertible antipode.
\end{lemma}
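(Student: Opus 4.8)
The plan is to transport the Hopf algebra structure on $U(\lie)$ — comultiplication, counit, antipode — through the completion $\theta: U(\lie)\to\Fy$ and check that each structure map extends continuously and satisfies the required identities. First I would recall that $U(\lie)$ is a cocommutative Hopf algebra over $L$: the comultiplication $\Delta: U(\lie)\to U(\lie)\otimes_L U(\lie)$ is the algebra map determined by $\x\mapsto\x\otimes 1+1\otimes\x$ for $\x\in\lie$, the counit $\varepsilon: U(\lie)\to L$ is the augmentation, and the antipode $S$ is the algebra anti-automorphism with $S(\x)=-\x$; note $S$ is its own inverse, so in particular $S$ is invertible. The point is that all three maps are morphisms of filtered (or seminormed) objects for the norms $||.||_{\dX,r}$, hence extend to the completion.

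The key computation is that, with respect to a fixed PBW basis $\x_1,\dots,\x_d$ with integral structure constants (which by Lemma \ref{independent} we may assume without affecting the topology), the comultiplication is norm-decreasing in the appropriate sense: for the tensor-product seminorm induced by $||.||_{\dX,r}$ on $U(\lie)\otimes_L U(\lie)$ one has $||\Delta(\lambda)||\leq ||\lambda||_{\dX,r}$ for all $\lambda$ and all $r>1$. This reduces to the PBW monomials: expanding $\Delta(\dX^\al)=\prod_i(\x_i\otimes 1+1\otimes\x_i)^{\al_i}$ and collecting terms against the PBW bases of the two tensor factors, every monomial $\dX^\beta\otimes\dX^\gamma$ occurring has $|\beta|+|\gamma|\leq|\al|$ (here one uses again that, by Lemma \ref{mult}, $||.||_{\dX,r}$ is multiplicative, so reordering the $\x_i$ only costs lower-order terms of strictly smaller norm); since $r>1$ this gives $r^{|\beta|}r^{|\gamma|}\leq r^{|\al|}$ and the coefficients are already of norm $\leq 1$ by cocommutativity over $\Z$. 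Similarly $\varepsilon$ has operator norm $\leq 1$ for every $r>1$, and $S$ preserves each $||.||_{\dX,r}$ exactly because $S$ permutes PBW monomials up to sign and lower-order terms. Therefore $\Delta$, $\varepsilon$, $S$ extend to continuous maps $\hat\Delta:\Fy\to\Fy\hat\otimes_L\Fy$, $\hat\varepsilon:\Fy\to L$, $\hat S:\Fy\to\Fy$; here one should also identify the completion of $U(\lie)\otimes_L U(\lie)$ in the tensor seminorms with $\Fy\hat\otimes_L\Fy$, which is where the choice of the completed \emph{inductive} tensor product $\hat\otimes_L$ and the nuclearity/Fr\'echet nature of the pieces $D_r$ enters.

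Finally I would verify the Hopf axioms — coassociativity, counit, antipode — on $\Fy$. Each is an identity between continuous maps that already holds on the dense subalgebra $U(\lie)$, so it extends by continuity; the only care needed is that both sides factor through the appropriate completed tensor powers, which follows from the norm estimates above together with functoriality of $\hat\otimes_L$. The compatibility of $\hat\Delta$ and $\hat\varepsilon$ with the multiplication of $\Fy$ (i.e. that they are algebra maps in the braided monoidal category of topological algebras) likewise follows by density from the corresponding facts for $U(\lie)$. Invertibility of the antipode is immediate since $\hat S\circ\hat S=\mathrm{id}$ extends $S\circ S=\mathrm{id}$. The main obstacle is the bookkeeping in the preceding paragraph: establishing cleanly that $\Delta$ is contractive for the tensor seminorms and that the seminormed completion of $U(\lie)\otimes_L U(\lie)$ is exactly $\Fy\hat\otimes_L\Fy$ — in other words, checking that the family of norms $||.||_{\dX,r}$ is genuinely compatible with the Hopf structure in the sense required by the definition of a Hopf $\hat\otimes$-algebra in \cite{P}, sect. 2.
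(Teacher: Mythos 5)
Your plan matches the paper's (very terse) proof: both hinge on the observation that $\Fy$ is Fr\'echet, so the inductive and projective tensor topologies on $\Fy\otimes_L\Fy$ coincide and the topology on $\Fy\hat\otimes_L\Fy$ is generated by the tensor-product norms $||.||_{\dX,r}\otimes||.||_{\dX,r}$; the norm estimates you spell out for $\Delta$, $\varepsilon$, $S$ are exactly the ``direct computation'' the paper leaves to the reader, and the density argument for the Hopf axioms is implicit there as well. One minor point: nuclearity plays no role — the Fr\'echet property alone gives the coincidence of tensor topologies via \cite{NFA}, Prop.~17.6 — and the notation $D_r$ is not from this paper, but neither affects the correctness of the argument.
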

\begin{proof}
The locally convex vector space $\Fy$ is visibly a Fr\'echet space
(\cite{NFA}, \S8) whence the inductive and projective tensor
product topologies on $\Fy\otimes_L\Fy$ coincide ([loc.cit.],
Prop. 17.6). Fixing a basis $\x_1,...,\x_d$ of $\lie$ the topology
on the topological algebra $\Fy\hat{\otimes}_L\Fy$ may therefore
be described by the tensor product norms $||.||_{\dX,r}\otimes
||.||_{\dX,r}, r>1$. A direct computation now shows that the usual
Hopf algebra structure on $U(\lie)$ extends to the completion
$\Fy$.
\end{proof}
Remark: Suppose $L$ is a $p$-adic local field, $G$ is a locally
$L$-analytic group and $C^{an}_1(G,L)$ denotes the stalk at $1\in
G$ of germs of $L$-valued locally analytic functions on $G$. Then
$\Fy=C^{an}_1(G,L)'_b$ as locally convex vector spaces (\cite{K1},
Prop. 1.2.8) in accordance with the complex situation (cf.
\cite{R},\cite{P}). Analogous to this situation (cf. \cite{P},
sect. 8) the space $C^{an}_1(G,L)$ inherits a Hopf
$\hat{\otimes}$-algebra structure by functoriality in $G$. The
structure on $\Fy$ may then be obtained by passing to strong
duals.

\begin{corollary}\label{weak}
Suppose that the natural continuous map
\begin{equation}\label{augmentation}
\Fy\hat{\otimes}_{U(\lie)}L\longrightarrow L \end{equation} is a
topological isomorphism. If there exists a projective resolution
$P_\bullet\rightarrow L$ in $\mathcal{M}_{U(\lie)}$ such that the
base extension via $\theta$
\begin{equation}\label{base}\Fy\hat{\otimes}_{U(\lie)}P_\bullet\longrightarrow
\Fy\hat{\otimes}_{U(\lie)}L \end{equation} is a $s$-exact complex
in $\mathcal{M}_{\Fy}$, then $\theta$ is stably flat.
\end{corollary}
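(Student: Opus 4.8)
The plan is to reduce stable flatness of $\theta$ to the two hypotheses on the trivial module, by exploiting the Hopf $\hat\otimes$-algebra structures on $U(\lie)$ and $\Fy$ (Lemma \ref{Hopf}) through the classical device which realizes the regular bimodule of a Hopf algebra as a module induced, along the ``diagonal'', from the trivial module. The key algebraic fact is this: for any Hopf $\hat\otimes$-algebra $H$ with counit $\epsilon$ and invertible antipode $S$, the map $\delta_H\colon H\to H^{e}=H\,\hat\otimes_L H^{op}$, $a\mapsto\sum a_{(1)}\otimes S(a_{(2)})$, is a continuous homomorphism of topological algebras, and the assignments $(x\otimes y)\otimes1\mapsto xy$ and $h\mapsto(h\otimes1)\otimes1$ are mutually inverse continuous $H$-bimodule isomorphisms between the induced module $H^{e}\,\hat\otimes_{\delta_H,H}L$ (with $L$ the trivial $H$-module via $\epsilon$) and the regular $H$-bimodule $H$; continuity is clear since every space in sight is Fr\'echet or carries the finest locally convex topology. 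Since $\theta$ is a morphism of Hopf $\hat\otimes$-algebras one has $\theta^{e}\circ\delta_{U(\lie)}=\delta_{\Fy}\circ\theta$ and $\epsilon_{\Fy}\circ\theta=\epsilon_{U(\lie)}$; in particular \eqref{augmentation} is an isomorphism of \emph{trivial} $\Fy$-modules.

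Next I build a convenient projective resolution of $U(\lie)$ in $\mathcal{M}_{U(\lie)^{e}}$. Using the antipode to identify $U(\lie)^{op}\cong U(\lie)$ gives an algebra isomorphism $U(\lie)^{e}\cong U(\lie\oplus\lie)$ carrying $\delta_{U(\lie)}(U(\lie))$ onto the diagonal subalgebra $U(\lie_{\mathrm{diag}})$; by the Poincar\'e--Birkhoff--Witt theorem $U(\lie\oplus\lie)$ is free, hence $s$-flat (all spaces carrying the finest locally convex topology), as a right $U(\lie_{\mathrm{diag}})$-module. Hence, with $P_\bullet\to L$ the projective resolution of the trivial $U(\lie)$-module supplied by the hypothesis, the complex $P'_\bullet:=U(\lie)^{e}\,\hat\otimes_{\delta_{U(\lie)},U(\lie)}P_\bullet\to U(\lie)^{e}\,\hat\otimes_{\delta_{U(\lie)},U(\lie)}L\cong U(\lie)$ is a projective resolution of $U(\lie)$ in $\mathcal{M}_{U(\lie)^{e}}$: its terms are $s$-projective because induction along an algebra homomorphism preserves $s$-projectivity, and it is $s$-exact by $s$-flatness. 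It then suffices to show that $\Fy^{e}\,\hat\otimes_{U(\lie)^{e}}(-)$ sends this particular resolution to a projective resolution of $\Fy$ in $\mathcal{M}_{\Fy^{e}}$ --- one projective resolution suffices, since any two are chain homotopy equivalent and the functor preserves $s$-projectives.

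By transitivity of induction together with $\theta^{e}\circ\delta_{U(\lie)}=\delta_{\Fy}\circ\theta$ there is a natural isomorphism of complexes
\[
\Fy^{e}\,\hat\otimes_{U(\lie)^{e}}P'_\bullet\;\cong\;\Fy^{e}\,\hat\otimes_{\delta_{\Fy},\Fy}\bigl(\Fy\,\hat\otimes_{U(\lie)}P_\bullet\bigr).
\]
By hypothesis \eqref{base}, together with \eqref{augmentation} and the first paragraph, the complex $\Fy\,\hat\otimes_{U(\lie)}P_\bullet\to L$ is $s$-exact, and its terms $\Fy\,\hat\otimes_{U(\lie)}P_n$ are $s$-projective over $\Fy$ (being base changes along $\theta$ of $s$-projective modules); thus it is a projective resolution of the trivial $\Fy$-module $L$. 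Granting that $\Fy^{e}$ is $s$-flat as a right module over $\delta_{\Fy}(\Fy)\cong\Fy$, the functor $\Fy^{e}\,\hat\otimes_{\delta_{\Fy},\Fy}(-)$ carries this into an $s$-exact complex of $s$-projective $\Fy$-bimodules, augmented over $\Fy^{e}\,\hat\otimes_{\delta_{\Fy},\Fy}L\cong\Fy$ by the first paragraph applied to $H=\Fy$. Therefore $\Fy^{e}\,\hat\otimes_{U(\lie)^{e}}P'_\bullet\to\Fy$ is a projective resolution of $\Fy$ in $\mathcal{M}_{\Fy^{e}}$, i.e. $\theta$ is stably flat.

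The crux is the flatness of $\Fy^{e}$ over the diagonal copy of $\Fy$ used in the last step: it is precisely what promotes the vanishing of the relevant $\mathcal{T}or$-groups to the $s$-exactness demanded by the exact-category formalism. I would prove it by identifying $\Fy^{e}\cong\Fl(\lie\oplus\lie)$ --- the tensor-product norms $\|.\|_{\dX,r}\otimes\|.\|_{\dX,r}$ coincide with the Poincar\'e--Birkhoff--Witt norms of $\lie\oplus\lie$ attached to the combined basis, cf. the proof of Lemma \ref{Hopf} --- and then choosing, via Lemma \ref{independent} (after a homothety making the structure constants integral), an ordered basis of $\lie\oplus\lie$ whose initial segment is a basis of $\lie_{\mathrm{diag}}$: the corresponding Poincar\'e--Birkhoff--Witt monomials exhibit $U(\lie\oplus\lie)$ as a free right $U(\lie_{\mathrm{diag}})$-module in an isometric way (Lemma \ref{mult}), so on completions one obtains an isomorphism $\Fy^{e}\cong\Fl(\mathfrak{c})\,\hat\otimes_L\,\Fy$ of right $\Fy$-modules for a vector-space complement $\mathfrak{c}$ of $\lie_{\mathrm{diag}}$ (with $\Fl(\mathfrak{c})$ the closed span of the monomials in a basis of $\mathfrak{c}$), whence $s$-flatness; alternatively one descends to the Banach completions with respect to the single norms $\|.\|_{\dX,r}$ and argues on associated graded rings. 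The remaining points --- associativity of the completed tensor products and preservation of the $s$-structure under induction and base change --- are routine within the framework of \cite{K2},\cite{P},\cite{T1}.
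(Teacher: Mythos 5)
Your argument is correct and is essentially a reconstruction of the proof of Pirkovski's Prop.\ 3.7, which the paper simply cites after establishing the Hopf $\hat\otimes$-algebra structure on $\Fy$ (Lemma \ref{Hopf}). The $s$-flatness of $\Fy^e$ over the diagonal copy $\delta_{\Fy}(\Fy)$, which you correctly flag as the crux, is precisely the Hopf-module-theoretic heart of that cited proposition, and your PBW/norm sketch via $\Fy^e\cong\Fl(\lie\oplus\lie)$ with a basis adapted to $\lie_{\mathrm{diag}}$ is a sound nonarchimedean way to obtain it.
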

\begin{proof}
This follows from the above lemma together with \cite{P}, Prop.
3.7. Note that the proof of the latter proposition directly
carries over to our setting since it is based on formal properties
of Hopf $\hat{\otimes}$-algebras with invertible antipodes in
certain braided monoidal categories.
\end{proof}

Remark: A.Y. Pirkovski calls homomorphisms between complex
topological algebras satisfying ({\it mutatis mutandis}) the
conditions (\ref{augmentation}) and (\ref{base}) {\it weak
localizations}. The content of [loc.cit.], Prop. 3.7 is then that
a homomorphism between Hopf $\hat{\otimes}$-algebras with
invertible antipodes is a localization if and only if it is a weak
localization.

\bigskip
By Lem. \ref{mult} the map $\theta$ induces inclusions
\begin{equation}\label{incl}U(\lie)\subseteq\hat{U}(\lie)\subseteq\Fy.\end{equation}
On the other hand, suppose $||.||$ is a submultiplicative
semi-norm on $U(\lie)$. Choose a basis $\x_1,...,\x_d$ of $\lie$
with integral structure constants and let $r>1$ such that $r\geq
\max_j\,||\x_j||$. Given $\lambda\in U(\lie)$ write
$\lambda=\sum_{\al\in\N_0^d} d_\al\dX^\al,~d_\al\in L$ and compute
\[||\lambda||\leq\sup_\al |d_\al|~||\dX^\al||\leq\sup_\al |d_\al|
r^\al=||\lambda||_{\dX,r}.\] Hence, the second inclusion in
(\ref{incl}) is surjective and the second part of the theorem is
proved.

\subsection{Contracting homotopies}
We fix a real $r>1$, a $L$-basis $\x_1,...,\x_d$ of $\lie$ with
integral structure constants and let $||.||_r:=||.||_{\dX,r}$.
Recall the homological standard complex
$U_\bullet:=U(\lie)\otimes_L\bigwedge^\bullet\lie$ with
differential $\partial=\psi+\phi$ where
\[
\begin{array}{rl}
  \psi(\lambda\otimes\x_1\wedge...\wedge\x_q) &=\sum_{s<t}(-1)^{s+t}\lambda\otimes
[\x_s,\x_t]\wedge\x_1\wedge...\wedge
\widehat{\x_s}\wedge...\wedge\widehat{\x_t}\wedge...\wedge\x_q,
\\&\\
 \phi(\lambda\otimes\x_1\wedge...\wedge\x_q)&=
  \sum_{s}(-1)^{s+1}\lambda\x_s\otimes\x_1\wedge...\wedge
\widehat{\x_s}\wedge...\wedge\x_q\\
\end{array}
\]
(cf. \cite{CE}). Let $I_q$ be the collection of indices $1\leq
i_1<...<i_q\leq d$ and let $\lambda_q=\sum_{I\in I_q} u_I\otimes
x_I\in U_q$ with $u_I\in U(\lie), x_I=\x_{i_1}\wedge...\wedge
\x_{i_q}\in\bigwedge^q\lie$ be an element. If $\sum_q\lambda_q\in
U_\bullet$ we let \begin{equation}\label{rep}
||\sum_q\lambda_q||_r:=\sup_qr^q\sup_{I\in
I_q}\,||u_I||_r.\end{equation} By Lem. \ref{mult} $U_\bullet$
becomes in this way a faithfully normed left $U(\lie)$-module (in
the sense of \cite{BGR}, Def. 2.1.1/1).
\begin{lemma}\label{diff}
The differential $\partial$ is norm-decreasing on
$(U_\bullet,||.||_r)$.
\end{lemma}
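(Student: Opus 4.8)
The plan is to verify that both summands $\psi$ and $\phi$ of the differential are individually norm-decreasing on $(U_\bullet,\|.\|_r)$, from which the claim for $\partial=\psi+\phi$ follows by the ultrametric inequality. Since the norm (\ref{rep}) is defined as a supremum over the components $u_I\otimes x_I$, and $\psi,\phi$ act by $L$-linear combinations of such components (with coefficients $\pm 1$, which have absolute value $\leq 1$), it suffices to bound the norm of the image of a single basis element $\lambda\otimes\x_{i_1}\wedge\dots\wedge\x_{i_q}$ with $\lambda\in U(\lie)$.

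First I would treat $\psi$. Its image is a sum of terms $\pm\,\lambda\otimes[\x_s,\x_t]\wedge\x_1\wedge\dots\wedge\x_q$ (with $s,t$ omitted), and writing $[\x_s,\x_t]=\sum_k c_{stk}\x_k$ with $|c_{stk}|\leq 1$ by the integrality hypothesis, each such term is an $L$-combination with integral coefficients of standard basis elements of the form $\lambda\otimes x_J$ with $J\in I_{q-1}$. Each of these has $\|.\|_r$-norm at most $r^{q-1}\|\lambda\|_r\leq r^q\|\lambda\|_r$ since $r>1$. Hence $\|\psi(\lambda\otimes x_I)\|_r\leq r^q\|\lambda\|_r=\|\lambda\otimes x_I\|_r$, so $\psi$ is norm-decreasing.

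Next I would treat $\phi$. Its image is a sum of terms $\pm\,\lambda\x_s\otimes x_{J}$ with $J\in I_{q-1}$, and by Lemma \ref{mult} the norm $\|.\|_r$ is multiplicative on $U(\lie)$, so $\|\lambda\x_s\|_r=\|\lambda\|_r\|\x_s\|_r=r\|\lambda\|_r$. Therefore each term has norm $r^{q-1}\cdot r\|\lambda\|_r=r^q\|\lambda\|_r$, and by the ultrametric inequality $\|\phi(\lambda\otimes x_I)\|_r\leq r^q\|\lambda\|_r=\|\lambda\otimes x_I\|_r$, so $\phi$ is norm-decreasing as well. Combining, $\|\partial(\lambda\otimes x_I)\|_r\leq\max(\|\psi(\lambda\otimes x_I)\|_r,\|\phi(\lambda\otimes x_I)\|_r)\leq\|\lambda\otimes x_I\|_r$, and passing to general elements via the supremum defining (\ref{rep}) gives the result. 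I do not anticipate a serious obstacle here; the only point requiring a little care is bookkeeping the reindexing of wedge factors when inserting $[\x_s,\x_t]$ or deleting $\x_s$, but since the reordering only introduces signs it does not affect the norm estimates.
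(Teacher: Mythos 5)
Your proposal is correct and is essentially the same argument as the paper's: the paper bounds $\|\partial(\lambda_q)\|_r$ in a single chain of inequalities using $|c_{ijk}|\leq 1$, multiplicativity of $\|.\|_r$ on $U(\lie)$, and the ultrametric inequality, exactly as you do. Splitting into $\psi$ and $\phi$ is just a presentational choice; the key estimates ($r^{q-1}\|u_I\|_r$ for $\psi$, $r^{q-1}\cdot r\|u_I\|_r$ for $\phi$, both $\leq r^q\|u_I\|_r$ since $r>1$) are identical.
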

\begin{proof}
Let $c_{ijk}$ denote the structure constants of $\x_1,...,\x_d$.
Since $|c_{ijk}|\leq 1$ and since $||.||_r$ is multiplicative on
$U(\lie)$ (Lem. \ref{mult}) we obtain
\[
\begin{split}
||\partial(\lambda_q)||_r\leq&\sup_{I\in
I_q}||\sum_{s<t}(-1)^{s+t}u_I\otimes
(\sum_kc_{stk}\x_k)\wedge\x_1\wedge...\wedge\widehat{\x_s}\wedge...\wedge\widehat{\x_t}\wedge...\wedge\x_q\\
& +\sum_{s}(-1)^{s+1}u_I\x_s\otimes\x_1\wedge...\wedge
\widehat{\x_s}\wedge...\wedge\x_q)||_r\\
\leq&\sup_{I\in
I_q}\,\max\,(\sup_{s<t}\,r^{q-1}||u_I||_r\,,\sup_s\,r^q||u_I||_r)\\
\leq & \sup_{I\in I_q}\,r^q ||u_I||_r\,=||\lambda_q||_r.
\end{split}
\]
\end{proof}
In the following we endow $U_\bullet$ with the locally convex
topology induced by the family of norms $||.||_r, r>1$. The last
result then implies that $\partial$ is continuous.

Recall that a {\it contracting homotopy} on an augmented
homological complex of $L$-vector spaces
$X_\bullet\stackrel{\epsilon}{\rightarrow} L$ is an $L$-linear map
$\eta: L\rightarrow X_0$ together with a family of $L$-linear maps
$s_q: X_q\rightarrow X_{q+1},~q\geq 0$ satisfying
\[
\begin{array}{ll}
  \epsilon\circ\eta(x)=x, & (\partial_{q+1}\circ s_q+s_{q-1}\circ\partial_{q})(y)=y, \\
   &  \\
s_0\circ \eta(x)=0,  & \partial_1\circ s_0(z)=z-(\eta\circ\epsilon)(z) \\
\end{array}
\]
for $x\in L,~y\in X_q,~z\in X_0,~q\geq 1$ (e.g. \cite{L},
V.1.1.4). Now suppose that $X_\bullet\rightarrow L$ is an
augmented complex in $\mathcal{M}_A$ for some topological
$L$-algebra $A$. If it admits a contracting homotopy such that the
maps $\eta$ and $s_q,~q\geq 0$ are continuous then it is $s$-exact
(\cite{T1}, remark after Def. 1.5).

~\\Recall that the augmented complex
$U_\bullet\stackrel{\epsilon}{\rightarrow} L$ has a distinguished
contracting homotopy $s$ ([loc.cit.], V.1.3.6.2). To review its
construction let $S(\lie)$ be the symmetric algebra of $\lie$. To
ease notation we denote its natural augmentation by $\epsilon$ as
well. Let
\[S_\bullet:=S(\lie)\otimes_L\dot{\bigwedge}\lie\stackrel{\epsilon}{\longrightarrow}L\]
be the augmented {\it Koszul complex} ([loc.cit.], V.1.3.3)
attached to the vector space $\lie$ with differential $\phi$ ({\it
mutatis mutandis}). Note that the choice of $\x_1,...,\x_d$
induces an isomorphism
\begin{equation}\label{iso} f:
U_\bullet\stackrel{\cong}{\longrightarrow}S_\bullet\end{equation}
as $L$-vector spaces compatible with $\epsilon$. The augmented
complex $S_\bullet\rightarrow L$ comes equipped with the following
contracting homotopy $\bar{s}$ depending on the basis
$\x_1,...,\x_d$ (cf. [loc.cit.], (1.3.3.4)). In case $d=1$ it is
given by the structure map $\eta: L\rightarrow S(\lie)$ together
with $\bar{s}_0:=S(\lie)\rightarrow S(\lie)\otimes\bigwedge^1\lie$
defined via $\bar{s}_0(\x_1^n)=\x_1^{n-1}\otimes \x_1$ for all
$n\in\N$ and $\bar{s}_0(1)=0$. In general, the definition is
extended to the tensor product
\[f^1_\bullet: S^1_\bullet\otimes_L\cdot\cdot\cdot\otimes_LS^d_\bullet\stackrel{\cong}{\longrightarrow}S_\bullet\]
by general principles (cf. [loc.cit.], V.1.3.2.). Here,
$S^j_\bullet$ equals the Koszul complex of the vector space
$L\x_j$ and $f^1_\bullet$ comes from functoriality of $S_\bullet$
applied to $\lie=\oplus_{j}L\x_j$. One obtains from $\bar{s}$ the
desired homotopy $s$ on $U_\bullet$ as follows: pulling $\bar{s}$
back to $U_\bullet$ via $f$ gives an $L$-linear map $\sigma$ on
$U_\bullet$.
It gives rise to maps \[\sigma^{(n)}: U_\bullet\longrightarrow
U_\bullet,~\sigma^{(n)}_q: U_q\longrightarrow U_{q+1}\] (cf.
[loc.cit.], Lem. V.1.3.5) having the property: for fixed $x\in
U_q$ the sequence $(\sigma^{(n)}_q(x))_{n\in\N}\subseteq U_{q+1}$
becomes eventually stationary ([loc.cit.], remark after formula
V.1.3.6.2). Then
\[s_q(x):=\lim_n\sigma^{(n)}_q(x)\] defines the desired contracting
homotopy $s$ on $U_\bullet\rightarrow L$.

Given $||.||_r, r>1$ on $U_\bullet$ we may use the basis
$\x_1,...,\x_d$ to define in complete analogy to (\ref{norms}) and
(\ref{rep}) norms on $S(\lie)$ and $S_\bullet$ which we also
denote by $||.||_r$.
\begin{lemma}\label{firststep}
The homotopy $\bar{s}$ is norm-decreasing on
$(S_\bullet,||.||_r)$.
\end{lemma}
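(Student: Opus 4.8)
The plan is to reduce the general case to the one-dimensional case $d=1$ via the tensor-product decomposition $f^1_\bullet\colon S^1_\bullet\otimes_L\cdots\otimes_L S^d_\bullet\xrightarrow{\cong}S_\bullet$ used to define $\bar{s}$ in the first place. First I would observe that the norm $||.||_r$ on $S_\bullet$ is, under this isomorphism, the tensor-product norm of the corresponding norms on the factors $S^j_\bullet$: since $S(\lie)\cong S(L\x_1)\otimes_L\cdots\otimes_L S(L\x_d)$ as a graded algebra and the basis $\x_1,\ldots,\x_d$ is chosen compatibly, a monomial $\x_1^{\al_1}\cdots\x_d^{\al_d}$ has $||.||_r$-norm $r^{|\al|}=\prod_j r^{\al_j}$, which is exactly the product of the factorwise norms; the extra wedge factors only contribute the combinatorial shift $r^q$ already built into (\ref{rep}), and this too is multiplicative across the tensor decomposition. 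So $||.||_r$ is a cross-norm on the (algebraic) tensor product of the $S^j_\bullet$.

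Next I would treat the base case. For $d=1$ the homotopy is $\bar{s}_0(\x_1^n)=\x_1^{n-1}\otimes\x_1$ and $\bar{s}_0(1)=0$, together with $\eta\colon L\to S(L\x_1)$. On a general element $\lambda=\sum_{n\geq 0}c_n\x_1^n$ we get $\bar{s}_0(\lambda)=\sum_{n\geq 1}c_n\x_1^{n-1}\otimes\x_1$, whose $||.||_r$-norm (degree-$1$ part, so with a factor $r$) is $\sup_{n\geq 1}|c_n|\,r^{n-1}\cdot r=\sup_{n\geq 1}|c_n|\,r^{n}\leq||\lambda||_r$; and $\eta$ is visibly an isometry onto its image, in fact norm-decreasing. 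Hence $\bar{s}$ is norm-decreasing when $d=1$.

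Finally I would propagate this through the tensor product. The homotopy $\bar{s}$ on $S_\bullet$ is assembled from the $\bar{s}^{(j)}$ on the factors by the standard ``tensor product of contracting homotopies'' recipe (cf. \cite{L}, V.1.3.2): each map occurring in $\bar{s}$ is, up to signs, of the form $\mathrm{id}\otimes\cdots\otimes\bar{s}^{(j)}\otimes\cdots\otimes(\eta\circ\epsilon)$ or similar, i.e.\ a tensor product of maps each of which is norm-decreasing on its factor (the identity trivially, $\bar{s}^{(j)}$ by the base case, and $\eta\circ\epsilon$ because $\epsilon$ picks out the constant term and is norm-decreasing while $\eta$ is an isometry). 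Since $||.||_r$ is a cross-norm, a tensor product of norm-decreasing maps is norm-decreasing, and the signs do not affect norms; it follows that every component of $\bar{s}$ is norm-decreasing, hence $\bar{s}$ is norm-decreasing on $(S_\bullet,||.||_r)$. The main point requiring care is the first step — verifying that $||.||_r$ really is the tensor-product (cross-)norm under $f^1_\bullet$, since everything else then follows formally; the bookkeeping of the degree shifts $r^q$ in (\ref{rep}) across the tensor factors is the only place a computation is genuinely needed.
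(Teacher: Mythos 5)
Your proposal follows essentially the same route as the paper's own proof: reduce to $d=1$ via the tensor decomposition $f^1_\bullet$, verify that $\|.\|_r$ is the induced cross-norm (the paper does this a bit more formally by defining the tensor-product norm via an infimum and showing $f^1_\bullet$ is isometric using orthogonality of monomial bases), check the one-dimensional base case directly, and then propagate through the tensor product by the Lazard formula expressing $\bar{s}$ on a tensor product of Koszul complexes in terms of $\bar{s}$, $\eta$ and $\epsilon$ on the factors. The one place you gloss over is the isometry of $f^1_\bullet$, which the paper establishes carefully by the orthogonal decomposition $(S^{<d}_\bullet\otimes S^d_\bullet)_q=\oplus_{s+t=q}S^{<d}_s\otimes S^d_t$; but you correctly flag this as the point requiring a genuine computation, and your sketch is right.
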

\begin{proof}
By induction on $d={\rm dim}_L \lie$ we may endow the left-hand
side of the isomorphism $f^1_\bullet$ with the following norm:
\[||\lambda||_r:=\sup_{s+t=q}
\inf_{~(\lambda_s),(\mu_t)}||\lambda_s||_r\,||\mu_t||_r\] where
$\lambda\in (S^{i}_\bullet\otimes_L
S^j_\bullet)_q=\oplus_{s+t=q}S^{i}_s\otimes_L S^j_t$ is of the
form $\lambda=\sum_{s+t=q}(\sum\lambda_s\otimes\mu_t)$ and the
infimum is taken over all possible representations
$\sum\lambda_s\otimes\mu_t$ of the $(s,t)$-component of $\lambda$.
We claim that $f^1_\bullet$ is isometric. Again by induction we
are reduced to prove the claim for $f^2_q$ where
\[f^2_\bullet: S^{<d}_\bullet\otimes_L S^d_\bullet\stackrel{\cong}{\longrightarrow} S_\bullet\]
and $S^{<d}_\bullet$ equals the Koszul complex of
$\oplus_{j<d}L\x_j$. Fix $q\geq 0$. By definition of $||.||_r$ the
decomposition \[(S^{<d}_\bullet\otimes
S^d_\bullet)_q=\oplus_{s+t=q}S^{<d}_s\otimes S^d_t\] is
orthogonal. By definition of $f^2_q$ and since the elements
$\{1\otimes x_{I_q}\}_{I_q}$ are orthogonal in $S_q$, $f^2_q$
preserves this orthogonality in $S_q$. It therefore suffices to
fix $s+t=q$ and prove $||f^2_q(\lambda)||_r=||\lambda||_r$ for
$\lambda\in S^{<d}_s\otimes_L S^d_t$. In both cases ($s=q$ and
$s=q-1$) this is a straightforward computation whence
$f^1_\bullet$ is indeed isometric. Next we prove that $\bar{s}$ is
norm-decreasing on the left-hand side of the isomorphism
$f^1_\bullet$. For $d=1$ this follows since $\eta$ and $\bar{s}_0$
are certainly norm-decreasing. By induction we may suppose that
this is true on the complex $S^{<d}_\bullet$ and consider the
tensor product $S^{<d}_\bullet\otimes_LS^d_\bullet$. Let
$\lambda\in (S^{<d}_\bullet\otimes_L S^d_\bullet)_q$. Suppose
$q=0$ and hence $\lambda\in L$. It is then clear that
$||s(\lambda)||_r=||\eta(\lambda)\otimes 1||_r=||\lambda||_r$
where the first identity follows from formula [loc.cit.],
V.1.3.2.2. So assume $q>0$. Write
$\lambda=\sum_{s+t=q}(\sum\lambda_s\otimes\mu_t)$. Then
\[ \bar{s}(\lambda)=\sum_{s+t=q,s>0}\sum
\bar{s}(\lambda_s)\otimes\mu_t+\sum_{s+t=q,s=0}\sum
\bar{s}(\lambda_s)\otimes\mu_t+\eta\epsilon(\lambda_s)\otimes
\bar{s}(\mu_t).
\] according to the formulas [loc.cit.], V.1.3.2.2/1.3.2.3. Using the induction hypothesis on the right-hand side
one obtains $||\bar{s}(\lambda)||_r\leq ||\lambda||_r$ as desired.
\end{proof}

\begin{lemma}\label{cont} The homotopy $s$ is
continuous with respect to the locally convex topology on
$U_\bullet$.
\end{lemma}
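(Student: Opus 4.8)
The strategy is to transfer the problem, via the isomorphism $f$ of (\ref{iso}), to the Koszul complex $S_\bullet$, where Lemma \ref{firststep} already tells us that the basic homotopy $\bar s$ is norm-decreasing for every $||.||_r$, $r>1$. Since $f$ is defined using the same basis $\x_1,\dots,\x_d$ that defines the norms $||.||_r$ on both $U_\bullet$ and $S_\bullet$, and the norms are built by the identical recipe (\ref{rep}) on both sides, $f$ is an isometry for each $||.||_r$. Hence the pulled-back map $\sigma$ on $U_\bullet$ is norm-decreasing for every $r>1$.

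The remaining issue is that $s$ is not $\sigma$ itself but the pointwise limit $s_q(x)=\lim_n\sigma^{(n)}_q(x)$ of the iterates $\sigma^{(n)}_q$ constructed in \cite{L}, Lem. V.1.3.5. So the plan is: first check that each $\sigma^{(n)}$ is again norm-decreasing with respect to every $||.||_r$. This should follow by inspecting the inductive construction of $\sigma^{(n)}$ from $\sigma$ in \cite{L} — it is assembled from $\sigma$, the differential $\partial$ (norm-decreasing by Lemma \ref{diff}), the augmentation $\epsilon$ and the unit $\eta$ (both clearly norm-decreasing), using only compositions and sums of such maps, so norm-decreasingness propagates through the recursion. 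Then, since for fixed $x\in U_q$ the sequence $(\sigma^{(n)}_q(x))_n$ is eventually stationary (\cite{L}, remark after V.1.3.6.2), we have $||s_q(x)||_r=\lim_n||\sigma^{(n)}_q(x)||_r\leq||x||_r$ for every $r>1$. Thus $s_q$ is norm-decreasing for each of the seminorms defining the locally convex topology on $U_\bullet$, hence continuous; as this holds for all $q$, the homotopy $s$ is continuous, and likewise $\eta$ trivially is.

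The main obstacle is the bookkeeping in the second step: one must be sure that the recursive definition of $\sigma^{(n)}$ really only involves operations that preserve norm-decreasingness. If \cite{L}'s construction introduces, say, a coefficient or a combinatorial factor not obviously of norm $\leq 1$, one would have to control it — but since $L$ is nonarchimedean and all structure constants were chosen integral, any integer coefficients have absolute value $\leq 1$ and cause no harm. So I expect this to go through cleanly, and the bulk of the argument is really the isometry statement for $f$ plus a careful reading of the cited construction.
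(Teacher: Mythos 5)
Your proposal is correct and follows essentially the same route as the paper: pull $\bar s$ back via the isometry $f$ to get a norm-decreasing $\sigma$, then show each $\sigma^{(n)}$ is norm-decreasing and use eventual stationarity to conclude $s$ is norm-decreasing for every $||.||_r$. The paper fills in your ``inspect the recursion'' step concretely by citing Lazard's formula $\sigma^{(n)}-\sigma^{(n-1)}=\sigma(1-\epsilon-\partial\epsilon-\epsilon\partial)^n$ and invoking the ultrametric inequality, which is exactly the kind of check you anticipated.
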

\begin{proof}
Fix the norm $||.||_r,~r>1$ on $U_\bullet$ and $S_\bullet$. By
construction, the map $f$ appearing in (\ref{iso}) becomes
isometric. Hence, Lem. \ref{firststep} shows the $L$-linear map
$\sigma=f^{-1}\circ\bar{s}\circ f$ on $U_\bullet$ to be
norm-decreasing. The augmentation $\epsilon: U_0\rightarrow L$ and
the differential $\partial$ are also norm-decreasing (the latter
by Lem. \ref{diff}). Invoking the maps $\sigma^{(n)}$ from above
we deduce from $\sigma^{(0)}=\sigma$ and the formula
\[\sigma^{(n)}-\sigma^{(n-1)}=\sigma
(1-\epsilon-\partial\epsilon-\epsilon\partial)^n\] ([loc.cit.],
V.1.3.5.4) by induction that all $\sigma^{(n)}$ are
norm-decreasing. Now the contracting homotopy $s$ of $U_\bullet$
is defined as the pointwise limit
$s_q(x):=\lim_n\sigma_q^{(n)}(x),~x\in U_q$. Since the sequence
$\sigma^{(n)}_q(x)$ for $n\rightarrow\infty$ becomes eventually
stationary $s$ is seen to be norm-decreasing on $(U_\bullet,
||.||_r)$. Since $r>1$ was arbitrary the proof is complete.
\end{proof}
\subsection{Stable flatness}
We prove the remaining part of the theorem and the corollary of
section 2, respectively.
\begin{proof}
By Cor. \ref{weak} it suffices to show that the natural continuous
map
\begin{equation}\label{augmentation2}
\Fy\hat{\otimes}_{U(\lie)}L\longrightarrow L \end{equation}
 is a
topological isomorphism and secondly, that there exists a
projective resolution $P_\bullet\rightarrow L$ in
$\mathcal{M}_{U(\lie)}$ such that the base extension via $\theta$
\begin{equation}\label{base2}\Fy\hat{\otimes}_{U(\lie)}P_\bullet\longrightarrow
\Fy\hat{\otimes}_{U(\lie)}L \end{equation} is a $s$-exact complex
in $\mathcal{M}_{\Fy}$.

The augmentation ideal in $\Fy$ clearly generates the augmentation
ideal of $U(\lie)$. Hence, the map (\ref{augmentation2}) is a
bijection between finite dimensional $L$-vector spaces and
therefore topological. To prove the second condition we let
$P_\bullet:=U_\bullet$ together with the augmentation $\epsilon$.
The complex $U_\bullet$ consists of $s$-projective modules (which
are even {\it $s$-free} in the sense of \cite{P}, sect. 1) and,
admitting the (continuous) contracting homotopy $s$, the augmented
complex $P_\bullet\rightarrow L$ is $s$-exact. Using associativity
of $\hat{\otimes}$ the base extension (\ref{base2}) may be
identified with $\Fy\otimes_L\dot{\bigwedge}\lie\rightarrow L$ and
thus, equals the Hausdorff completion of the topologized complex
$U_\bullet\rightarrow L$. By continuity (Lem. \ref{cont}) the map
$s$ extends to this completion yielding a continuous contracting
homotopy on (\ref{base2}).
\end{proof}

\begin{proof}
Let $M,N\in\mathcal{M}_{\Fy}$. Any projective resolution
$P_\bullet\rightarrow M$ in ${\rm Mod}(\lie)$ by abstract free
$U(\lie)$-modules is a projective resolution in $\Mo_{U(\lie)}$
when endowed with the finest locally convex topology. One may
check that \cite{P}, Prop. 3.3 remains valid in our setting whence
the natural map $\Fy\hat{\otimes}_{U(\lie)}M\rightarrow M$ is an
isomorphism in $\Mo_{\Fy}$. Hence, by stable flatness
$\Fy\hat{\otimes}_{U(\lie)}P_\bullet\rightarrow M$ is a projective
resolution of $M$ in $\mathcal{M}_{\Fy}$. The claims follow now
from the isomorphisms of complexes
\[\Lh_{\Fy}(\Fy\hat{\otimes}_{U(\lie)}P_\bullet,N)\simeq
\Lh_{U(\lie)}(P_\bullet,N)\simeq {\rm Hom}_\lie(P_\bullet,N)\] and
\[N\hat{\otimes}_{\Fy} (\Fy\hat{\otimes}_{U(\lie)}P_\bullet)\simeq
N\hat{\otimes}_{U(\lie)}P_\bullet\simeq
N\otimes_{U(\lie)}P_\bullet\] where the last isomorphisms in both
rows follow from the fact that,in each degree, $P_\bullet$ carries
the finest locally convex topology.
\end{proof}
 Remark: Following \cite{T1}, Def. 2.3 one may also introduce
relative Hochschild cohomology in our nonarchimedean setting.
Another proof of the corollary (at least for the Tor-groups) may
then be deduced using analogues of \cite{T2}, Prop. 1.4 and
\cite{P}, Prop. 3.4.

\bibliographystyle{amsplain}

\end{document}